\numberwithin{equation}{section}
\theoremstyle{plain}
\newtheorem{thm}{Theorem}[section]
\newtheorem{lem}{Lemma}[section]
\newcommand{\tilZ}{\tilde{Z}}
\newcommand{\Tf}{T_{f}}
\newcommand{\Tb}{T_{b}}
\newcommand{\Tbf}{\tilde{T}}
\newcommand{\tZ}{\theta'Z}
\newcommand{\tz}{\theta'z}
\newcommand{\ttZ}{\theta'\tilde{Z}}
\newcommand{\Ut}{U(\theta)}
\newcommand{\Uct}{U^{c}(\theta)}
\newcommand{\Uctzero}{U^{c}(\theta_{0})}
\newcommand{\Utzero}{U(\theta_{0})}
\newcommand{\mug}{\int S_{g}(v)dv}
\newcommand{\ettzeroz}{e^{(\theta-\theta_{0})'z}}
\newcommand{\ettzeroZi}{e^{(\theta-\theta_{0})'\tilde{Z}_{i}}}
\newcommand{\Ui}{U_{i}}
\newcommand{\Uci}{U^{c}_{i}}
\newcommand{\tzero}{\theta_{0}}
\newcommand{\tilC}{\tilde{C}}
\begin{document}

\begin{frontmatter}
\title{Efficient Estimation of Accelerated Lifetime Models under Length-Biased Sampling}
\runtitle{AFT Model Estimation in Length-Biased Sampling}

\begin{aug}
\author{\fnms{Pourab} \snm{Roy}\thanksref{t1}\ead[label=e1]{pourab.roy@fda.hhs.gov}}

\address{US Food and Drug Administration\\
	10903 New Hampshire Avenue Silver Spring, Maryland 20993\\
\printead{e1}}

\author{\fnms{Jason P.} \snm{Fine}\ead[label=e2]{jfine@email.unc.edu}}
\and
\author{\fnms{Michael R.} \snm{Kosorok}\ead[label=e3]{kosorok@bios.unc.edu}}

\address{Department of Biostatistics, University of North Carolina at Chapel Hill,\\ 3101 McGavran-Greenberg Hall, CB 7420
	Chapel Hill, North Carolina 27599-7420\\
	\printead{e2,e3}}

\thankstext{t1}{This work was done prior to the author joining the FDA and does not represent the official position of the FDA}

\runauthor{P. Roy et al.}

\affiliation{University of North Carolina at Chapel Hill}

\end{aug}

\begin{abstract}
In prevalent cohort studies where subjects are recruited at a cross-section, the time to an event may be subject to length-biased sampling, with the observed data being either the forward recurrence time, or the backward recurrence time, or their sum. In the regression setting, it has been shown that the accelerated failure time model for the underlying event time is invariant under these observed data set-ups and can be fitted using standard methodology for accelerated failure time model estimation, ignoring the length-bias. However, the efficiency of these estimators is unclear, owing to the fact that the observed covariate distribution, which is also length-biased, may contain information about the regression parameter in the accelerated life model. We demonstrate that if the true covariate distribution is completely unspecified, then the naive estimator based on the conditional likelihood given the covariates is fully efficient.
\end{abstract}

\begin{keyword}
	\kwd{accelerated failure time model}
	\kwd{backward recurrence time}
	\kwd{forward recurrence time}
	\kwd{length-biased time}
\end{keyword}
\tableofcontents
\end{frontmatter}

\section{Introduction}

Many study designs involve cross-sectional sampling, which may lead to length-biased sampling of a time to event $T$. In a prospective cohort study where the initiation time for the event is unknown and subjects are followed prospectively, a right-censored forward recurrence time $\Tf$ is observed. This occurs, for example, in HIV seroprevalence studies \citep{Brookmeyer87}, where time to AIDS following infection with HIV is of interest but the infection time is unknown. If the initiation time is known, but there is no follow-up, the backward recurrence time $\Tb$ is observed. This current duration design \citep{Keiding02} has been employed in pregnancy surveys, where current trier couples provide the length of an ongoing attempt at pregnancy, and in mover-stayer models \citep{Yamaguchi03}. If there is both an initiation time and a follow-up, a biased event time $T_{\mathrm{LB}} = \Tf + \Tb$ may be observed. When the sampling time is known, such data are commonly analyzed using methods for left-truncated data, where one conditions on the lack of an event prior to the sampling time, that is, $T_{\mathrm{LB}}>\Tb$.

In the aforementioned data set-ups, only subjects who have experienced an initiating event prior to sampling can potentially be sampled, and the sample is biased towards larger values of $T$. If one assumes that the rate of the initiating event is stationary over time, e.g., is a homogeneous Poisson process, then the sampling time falls uniformly in the interval between the initiation and the event times \citep{Cox69,Vardi82}. Letting $F_{T}$ denote the distribution of $T$, the length-biased version $T_{\mathrm{LB}}$ has distribution $F_{\mathrm{LB}}(t)=\int_{0}^{t} u dF_{T}(u)\mu_{T}^{-1}$, $\quad t \ge 0$,
where $\mu_{T} = \int_{0}^{\infty} u dF_{T}(u)$.
Under the uniform sampling time assumption \citep{Cox69,VanEs00,Keiding02}, $\Tf = T_{\mathrm{LB}}V$, where $V$ is uniform(0,1) and independent of $T_{\mathrm{LB}}$.  Thus, both $\Tf$ and $\Tb$ will have the same density function. Since the density is the same, we use $\Tbf$ to denote both $\Tf$ and $\Tb$. The density of $\Tbf$ is
\begin{equation}
g_{\Tbf}(t) = S_{T}(t)\mu_{T}^{-1}, t \in (0,\infty),
\label{eq1.1}
\end{equation}
where $S_{T} = 1-F_{T}$ is the survival function of $T$. This well-known result, given in expression (2) of \citep{Huang11}, can be derived from the uniformity of $V$, which yields that the conditional density of $\Tf$ given $T_{\mathrm{LB}}$ has the form
\begin{equation}
f_{\Tf | T_{\mathrm{LB}}}(s|t)=I(0<s<t)t^{-1},
\label{eq1.1new}
\end{equation}
yielding the joint density
$f_{\Tf,T_{\mathrm{LB}}}(s,t)=I(0<s<t)f_T(t)\mu_T^{-1}$, from which (\ref{eq1.1}) follows.

In the presence of a $p \times 1$ covariate vector $Z$ with density $h$, one may formulate the effect of $Z$ on the underlying event time $T$ via the accelerated lifetime model \begin{eqnarray}
T = e^{\tZ}U,
\label{eq1.2}
\end{eqnarray}
where $\theta$ is a $p \times 1$ regression parameter and $U$ is a non-negative random variable with density $g$, survival function $S$ and hazard function $\lambda(u)=g(u)/S(u)$. In the semiparametric version of (\ref{eq1.2}), with the distribution of $U$ completely unspecified, efficient estimation of $\theta$ without length-bias is achievable with and without right censoring \citep{Zeng07}.

The observed covariate is also subject to length-biased sampling. Using arguments in \cite{Chen09} and \cite{Mandel10}, we
first obtain that a consequence of (\ref{eq1.2}) is that the joint density of $T_{LB}$ and observed covariate $\tilde{Z}$
is proportional to $te^{-\theta'z}g(e^{-\theta'z}t)h(z)$ due to the length bias, yielding the joint density
\begin{equation}
f_{T_{LB},\tilde{Z}}(t,z)=te^{-\theta'z}g(e^{-\theta'z}t)\mu_g^{-1}h(z)/\int e^{\theta'z}h(z)dz, \label{eq1.2new}
\end{equation}
where the integral is over the range of $Z$ and $\mu_g$ is the mean associated with density $g$. This means that 
$T_{LB}$ satisfies an accelerated life model, given by 
\begin{equation}
T_{\mathrm{LB}} = e^{\theta ' Z_{\mathrm{LB}}}U_{\mathrm{LB}},
\label{eq1_lb}
\end{equation}
where $Z_{\mathrm{LB}}$ is independent of $U_{\mathrm{LB}}$ with density of the form 
\begin{equation*}
h_{Z_{\mathrm{LB}}}(z) = e^{\tz}h(z)/\int e^{\tz}h(z)dz
\end{equation*}
 and $U_{\mathrm{LB}}$ has the possibly non-monotone density 
$g_{U_{\mathrm{LB}}}(u) = ug(u)\mu_g^{-1}$, $u\in(0,\infty)$.

The relation $\Tbf=T_{LB}V$ still holds for a uniform$(0,1)$ $V$ independent of $T_{LB}$, so we can multiply (\ref{eq1.2new}) by (\ref{eq1.1new}), integrate over $t$, and replace $s$ with $t$, to obtain the joint density of $\tilde{T}$ and observed covariate $\tilde{Z}$:
\begin{equation}
f_{\Tbf,\tilde{Z}}(t,z) = \frac{e^{-\tz}S(e^{-\tz}t)}{\mu_{g}} \times \frac{e^{\tz}h(z)}{\int e^{\theta 'u}h(u)du}.
\label{eq1.3}
\end{equation}
Thus, if $T$ follows model (\ref{eq1.2}), then the distribution of $\Tbf$ also follows an accelerated lifetime model 
\begin{equation}
\tilde{T} = e^{\ttZ}\tilde{U},
\label{eq1.4}
\end{equation}
where $\tilde{Z}$ has a density of the form $h_{\tilde{Z},\theta}(z) = e^{\tz}h(z)/\int e^{\theta 'u}h(u)du$ and $\tilde{U}$ has monotone density $g_{\tilde{U}}(u) = S(u)/\int_{0}^{\infty}S(v)dv$.
Thus the accelerated lifetime structure is maintained in models for the forward and backward recurrence times, as discussed in \cite{Keiding11}, as well as for the length-biased time.

Since the conditional distributions of $\tilde{T}$ and $T_{\mathrm{LB}}$ satisfy accelerated lifetime models, existing estimation procedures may naively be applied to obtain semiparametric estimators for $\theta$. However, as the marginal distribution of the observed covariates depends on the parameter $\theta$, it is unclear whether estimators derived from the conditional distributions of $\tilde{T}$ and $T_{\mathrm{LB}}$ will be fully efficient. Estimation using observed covariates has been considered in \cite{Bergeron11}, and \cite{Chan13}. Under restrictive assumptions on $h$, for example, moment restrictions, improved estimation is possible. However, a comprehensive study of such issues with completely unspecified covariate distribution and right censoring has not been undertaken for general length-biased sampling. 

The main contribution of this paper is to show that a naive efficient estimator which ignores the dependence on $\theta$ in the marginal covariate distribution is still efficient for estimation of the regression parameter in length-biased and recurrence time data. Hence, the standard techniques that are used for estimation in the accelerated failure time models can also be applied in these cases without loss of information. We provide the theoretical derivation of the efficient score in Section \ref{sec:effsc}, simulation results and data analysis in Section \ref{sec:sim} and \ref{sec:dat} and conclude with some discussions in Section \ref{sec:disc}. 

\section{Efficient Scores and Estimation}\label{sec:effsc}

We start by defining some of the important assumptions and notation used in the paper, along with a description of some of the tangent spaces used in deriving the efficient score. The concepts and notation closely follow that given in Chapters 3 and 18 of \cite{K08}.

Let $\mathcal{G}$ be the class of all density functions on $\Re^{+}$ and $\mathcal{H}$ be the class of all density functions on $\Re^{p}$. The semiparametric model for the core accelerated lifetime model %in (\ref{eq1.2})
is given by
\begin{equation*}
\mathcal{P}^{*}=\left\{ P_{\theta,g,h}^{*} : \ \theta\in\Theta, g\in\mathcal{G}, h\in \mathcal{H}\right\},
\end{equation*}
where the distribution $P_{\theta,g,h}^{*}$ has a density with respect to an absolutely continuous measure $\nu$, 
\begin{equation*}
\frac{d P_{\theta,g,h}^{*}}{d\nu} (t,z)= e^{(\theta-\theta_{0})'z} g\{e^{(\theta-\theta_{0})'z}t\}h(z),
\end{equation*}
where $\theta_{0}$ is the true parameter value. For the accelerated failure time model of the recurrence times, the semiparametric model is 
\begin{equation}
\mathcal{P}=\left\{ P_{\theta,S_g, h} : \ \theta\in\Theta, g\in\mathcal{G}, h\in \mathcal{H}'\right\},
\label{eq2.2.1}
\end{equation}
where $S_g(u) = \int_{u}^{\infty}g(v)dv$ for $g \in \mathcal{G}$, and 
\begin{equation*}
\mathcal{H}' = \left\{ h: h\in \mathcal{H}, \int e^{\tz}h(z)d(z) < \infty, 
\int z^{\otimes 2}e^{\tz}h(z)dz < \infty, \theta\in\Theta\right\}.
\end{equation*}
We assume that $\Theta$ is a compact subset of $\mathcal{R}^{p}$. Further, we assume $P_{\theta,S_{g},h}$ has density  
\begin{equation*}
\frac{dP_{\theta,S_{g},h}}{d\nu}(t,z)=\frac{\ettzeroz S_{g}\{\ettzeroz t\} }{\mug} \times \frac{e^{\tz}h(z)}{\int e^{\tz}h(z)dz}.
\end{equation*}
Define $\mathcal{S} = \left\{ S_{g}: g \in \mathcal{G}\right\}$. Let the true distribution be $P_{0}=P_{\theta_{0},S_{0},h_{0}}$ with $S_{0}=S_{g_{0}}$. Define the separate submodels for each parameter, holding
the other parameters fixed, as
$\mathcal{P}_{\theta}  =  \{P_{\theta,S_{0},h_{0}}: \theta\in\Theta\}$, 
$\mathcal{P}_{S}  =  \{P_{\theta_{0},S,h_{0}}: S\in\mathcal{S}\}$ and
$\mathcal{P}_{h}  =  \{P_{\theta_{0},S_{0},h}: h\in\mathcal{H}'\}$.

Let $\dot{\mathcal{P}}_{\theta}$, $\dot{\mathcal{P}}_{S}$ and $\dot{\mathcal{P}}_{h}$ be the tangent spaces for $\mathcal{P}_{\theta}, \mathcal{P}_{S} \textrm{ and } \mathcal{P}_{h}$ at $P_{0}=P_{\theta_{0},S_{0},h_{0}}$. By definition of tangent spaces in Chapter 18 of \cite{K08}, these are all closed subsets of $L_2^0(P_0)$, where $L_2^0(P_0)$ denotes square-integrable functions integrating to zero with respect to $P_0$. For a density $\nu$, let $L_2(\nu)$ denote the space of square-integrable functions with respect to the measure $\int\nu$. For a survival function $S$, we similarly let $L_2(S)$ denote the space of square-integrable functions with respect to the measure $\int S$, even though $S$ may not integrate to 1. Tangent spaces for a given model represent the set of all likelihood score functions for one-dimensional submodels of the given model. The three tangent spaces just defined represent the likelihood based scores used to estimate the parameter given in the subscript while holding the remaining parameters fixed at their true values. Let $\dot{l}_{\theta}$ be the ordinary score for $\theta$ when $S$ and $h$ are fixed. Then the efficient score function $\tilde{l}_{\theta}\in \{ L_{2}^{0}(P_{0})\}^{p}$ for $\theta$ in the full model $\mathcal{P}$ at $P_{0}$ is $\tilde{l}_{\theta}=\dot{l}_{\theta} - \Pi_{0}(\dot{l}_{\theta}\mid \dot{\mathcal{P}}_{S} + \dot{\mathcal{P}}_{h})$, where $\Pi_{0}(l\mid \mathcal{W})$ denotes the orthogonal projection of $l$ onto the linear span of $\mathcal{W}$ \citep{BKRW}. 

\subsection{Inference for Forward and Backward Recurrence Times}

We now calculate the efficient score and information using the recurrence time $\tilde{T}$ potentially subject to right censoring, which covers both the forward and backward recurrence time settings. For the $i$-th individual, we observe ($\tilde{T}_i\wedge \tilde{C}_i,\delta_i,\tilde{Z}_i$), where $\tilde{T}_i$ is the recurrence time of the $i$-th individual, $\tilde{C}_i$ is the time of right censoring, which is assumed to be independent of the recurrence time conditionally given the covariates, $\delta_i$ is the indicator of whether the event time is observed and $\tilde{Z}_i$ is the $p \times 1$ observed covariate. Theorem \ref{thm:1} below demonstrates that in this setting, the efficient score equals that of the naive efficient estimator based on the conditional likelihood given the covariates.

For right-censored $\Tbf$, we assume $\Theta$ is a compact set in $\Re^{p}$, and that $\theta_{0}$ belongs in the interior of $\Theta$. For fixed but arbitrary $\theta$, we define our semiparametric model in terms of the distribution of $\Ut = e^{-\ttZ}\Tbf =e^{-(\theta-\theta_{0})'\tilZ} \tilde{U}$ and the corresponding censored variable $\Uct= e^{-\ttZ} \tilC$. The conditional density of $\Ut$ given $\tilZ = z$ is thus
\begin{equation*}
g_{\Ut}(u) = \frac{\ettzeroz S\{ e^{(\theta-\theta_{0})'z}u\} }{\int S(v)dv},
\end{equation*}
while the conditional hazard is
\begin{equation*}
\lambda_{\Ut}(u) = \frac{S\{ e^{(\theta-\theta_{0})'z}u \}}{\int_{u}^{\infty} S\{\ettzeroz w\}dw}, \quad u \in (0,\infty).
\end{equation*}
Given $\tilZ$ the density of $\Ut$ is monotone decreasing. We now state our assumptions:
\begin{itemize}
	\item[A1:] $\Tbf$ and $\tilC$ are independent given $\tilZ$;
	\item[A2:] The distribution of $\tilC$ is independent of the parameters $(\theta,S,h)$, and the distribution of $Z$ is independent of the parameters $(\theta,S)$;
	\item[A3:] $\int S(v)dv < \infty$;
	\item[A4:] $E_{g_{U(\theta)}}\left\{U^{2}\lambda(U)\right\} = \int u^{2}g_{U(\theta)}^{2}S(u)^{-1}du < \infty$.
\end{itemize}

The last assumption is needed to ensure that the density of $U(\theta )$ has finite Fisher information about $\theta$. 
The next theorem gives that the efficient score equals that from the naive efficient estimator.
\begin{thm}
	\label{thm:1}
	Suppose that the covariate vector $\tilZ$ is almost surely bounded. Define
	\begin{equation}
	M(t) = I\{\Ut\le t\} - \int_{0}^{t} I\{\Ut > s\}\lambda_{\Ut}(s)ds
	\label{eq2.2}
	\end{equation}
	and
	\begin{equation*}
	Ra(t) = a(t) - \frac{\int_{t}^{\infty}a(u)S(u)du}{\int_{t}^{\infty}S(u)du},\quad \textrm{ for } a\in L_{2}^{0}(S).
	\end{equation*}
	Then under (A1)--(A4) and with $\phi(u) = 1- u g(u)/S(u)$, the ordinary score for $\theta$ at $\theta=\tzero$ is
	\begin{equation}
	\dot{l}_{\tzero}= \tilZ\int_{0}^{\Uctzero} R\phi(s)dM(s) + (\tilZ - E\tilZ),
	\label{eq2.3}
	\end{equation} 
	the tangent space $\dot{\mathcal{P}}_{S}$ for $S$ is $\{\dot{l}_{S}b: b\in L_{2}^{0}(S)\}$ where the score operator $\dot{l}_{S}$ for $S$ is given by
	\begin{equation}
	\dot{l}_{S}b = \int_{0}^{\Uctzero} Rb(s)dM(s),
	\label{eq2.4}
	\end{equation}
	the tangent space for $h$ is $\{k: k\in L_{2}(h), \int k(z)e^{\theta_{0}'z}h(z)dz=0\}$, and the efficient score for $\theta$ at $\theta=\tzero$ is
	\begin{equation}
	\tilde{l}_{\theta,S} = \int_{0}^{\Uctzero}[\tilZ - E\{\tilZ | \Uctzero \ge s\}]R \phi(s)dM(s).
	\label{eq2.5}
	\end{equation}
\end{thm}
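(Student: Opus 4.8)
The plan is to produce the four objects asserted---the ordinary score $\dot{l}_{\tzero}$, the score operator $\dot{l}_{S}$, the covariate tangent space, and the efficient score $\tilde{l}_{\theta,S}$---and then to read off \eqref{eq2.5} by orthogonally projecting $\dot{l}_{\tzero}$ off the nuisance tangent space $\dot{\mathcal{P}}_{S}+\dot{\mathcal{P}}_{h}$. The organizing fact I would establish first, and lean on throughout, is that at the truth the transformed residual $\Utzero = e^{-\tzero'\tilZ}\tilde{T}=\tilde{U}$ is independent of the pair $(\tilZ,\Uctzero)$: by the representation \eqref{eq1.4}, $\tilde{U}$ is independent of $\tilZ$, while (A1) gives $\tilde{U}\perp\tilC\mid\tilZ$ since $\tilde{U}$ is a function of $(\tilde{T},\tilZ)$ given $\tilZ$; combining these yields $\tilde{U}\perp(\tilZ,\tilC)$ and hence $\tilde{U}\perp(\tilZ,\Uctzero)$. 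Consequently the intensity of the counting-process martingale $M$ in \eqref{eq2.2} is, conditionally on $\tilZ$, free of $z$, which is exactly what makes every inner product below computable in closed form.

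First I would obtain \eqref{eq2.3} by differentiating $\log\{dP_{\theta,S_{0},h_{0}}/d\nu\}$ at $\tzero$. The log-density factors into a conditional part for $\tilde{T}$ given $\tilZ$ and a marginal part for $\tilZ$. Differentiating the covariate factor $e^{\tz}/\int e^{\theta'u}h(u)du$ gives $z-\int u\,e^{\theta'u}h\,du/\int e^{\theta'u}h\,du$, which at $\tzero$ is the term $(\tilZ-E\tilZ)$. Differentiating the conditional part, after the change of variable to $\Ut=e^{-\theta'\tilZ}\tilde{T}$ and using $g_{\tilde{U}}=S/\int S$, produces a raw score proportional to $\tilZ\,\phi(\Utzero)$ on $\{\delta=1\}$ and to $\tilZ$ times a survival tail term on $\{\delta=0\}$. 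The crucial manipulation is the integration-by-parts identity $\int_{0}^{t}R\phi(s)\lambda_{\tilde{U}}(s)\,ds = -\int_{t}^{\infty}\phi S\big/\int_{t}^{\infty}S$, whose boundary term at $0$ vanishes because $\int_{0}^{\infty}\phi(u)S(u)\,du=\int_{0}^{\infty}S(u)\,du-\int_{0}^{\infty}u\,g(u)\,du=0$, i.e. $\phi\in L_{2}^{0}(S)$; this identity recombines the two cases into the single martingale integral $\tilZ\int_{0}^{\Uctzero}R\phi\,dM$.

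The score operator \eqref{eq2.4} I would derive identically: a one-dimensional submodel perturbing $S$ in a direction $b$ gives, after the same integration by parts, the score $\int_{0}^{\Uctzero}Rb\,dM$, and the restriction $b\in L_{2}^{0}(S)$ is precisely the mean-zero normalization (against $\int S$) that kills the boundary term, mirroring $\int\phi S=0$. For the covariate tangent space, a path $h_{\eta}$ with direction $a$ satisfying $\int a\,h=0$ induces the observed-data score $a-E_{h_{\tilZ,\tzero}}a$; letting $a$ vary shows this set equals $\{k\in L_{2}(h):\int k(z)e^{\tzero'z}h(z)\,dz=0\}$, the functions with mean zero under the observed covariate law, as claimed. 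The boundedness of $\tilZ$ guarantees $L_{2}(h)$ and the observed covariate $L_{2}$ agree, and (A4) guarantees all of these scores are square-integrable.

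The final step is the projection, and here I would first record that any martingale integral $\int c\,dM$ has conditional mean zero given $\tilZ$, so $\dot{\mathcal{P}}_{S}\perp\dot{\mathcal{P}}_{h}$ and the projection onto the sum splits. Projecting $\dot{l}_{\tzero}$ onto $\dot{\mathcal{P}}_{h}$ returns exactly $(\tilZ-E\tilZ)$, because its martingale part is orthogonal to all functions of $z$. Projecting $\tilZ\int R\phi\,dM$ onto $\dot{\mathcal{P}}_{S}$ uses the predictable covariation: conditioning on $\tilZ$ and invoking $\tilde{U}\perp(\tilZ,\Uctzero)$, the inner product with $\int Rb\,dM$ collapses to $\int R\phi(s)Rb(s)g_{\tilde{U}}(s)E\{\tilZ\,I(\Uctzero\ge s)\}\,ds$, and the normal equations identify the projection integrand as $R\phi(s)E\{\tilZ\mid\Uctzero\ge s\}$. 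Subtracting both projections from \eqref{eq2.3} cancels $(\tilZ-E\tilZ)$ and leaves $\int_{0}^{\Uctzero}[\tilZ-E\{\tilZ\mid\Uctzero\ge s\}]R\phi\,dM$, which is \eqref{eq2.5}. The hard part will be verifying that this candidate projection genuinely lies in the closed space $\dot{\mathcal{P}}_{S}$, i.e. that $R\phi(s)E\{\tilZ\mid\Uctzero\ge s\}$ equals $Rb$ for some $b\in L_{2}^{0}(S)$; I would settle this by solving the Volterra equation $(I-P)b=c^{*}$, with $Pb(t)=\int_{t}^{\infty}bS/\int_{t}^{\infty}S$, whose solution is recoverable from a first-order ODE for $Pb$, and I would lean on the boundedness of $\tilZ$ together with (A4) to confirm the solution is square-integrable so the projection is attained.
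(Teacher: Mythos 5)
Your outline reproduces the paper's architecture almost exactly—the organizing independence $\Utzero\perp(\tilZ,\Uctzero)$, the martingale form of the ordinary score, the covariate tangent space $[e^{\theta_0'z}]^{\perp}$, the orthogonality $\dot{\mathcal{P}}_{S}\perp\dot{\mathcal{P}}_{h}$, and the normal equations yielding $Rb^{*}(s)=E\{\tilZ\mid\Uctzero\ge s\}R\phi(s)$—but it has one genuine gap, and it is precisely the step the paper flags as its key technical point. When you derive the score operator \eqref{eq2.4}, you treat every $b\in L_{2}^{0}(S)$ as the score of a legitimate one-dimensional submodel, as if the nuisance parameter were a free density. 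It is not: in the model \eqref{eq2.2.1} the residual $\tilde{U}$ has density $S_{g}(u)/\int S_{g}$ with $S_{g}$ a survival function, so the density is constrained to be monotone decreasing. A path $\eta\mapsto(1+\eta b)S$ for an arbitrary mean-zero $b$ (e.g.\ an oscillating one) need not stay proportional to any survival function for $\eta\neq 0$, hence is not a path inside $\mathcal{P}_{S}$; your remark that $b\in L_{2}^{0}(S)$ "kills the boundary term" addresses integrability of the score, not membership of the path in the model. Consequently $\dot{\mathcal{P}}_{S}$ could a priori be a proper closed subspace of $\{\int_{0}^{\Uctzero}Rb\,dM:\ b\in L_{2}^{0}(S)\}$, in which case the projection would change and \eqref{eq2.5} would no longer be the efficient score—undermining the theorem's whole point, that the naive conditional-likelihood estimator loses no information. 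The paper closes this hole with Lemma~\ref{lem1}: it perturbs the underlying density $g$ along honest paths $g_{\eta}=\psi(\eta a)g/\int\psi(\eta a)g$ (which do remain in $\mathcal{G}$), computes the induced score operator $A_{S}$, and proves via an adjoint/denseness argument ($A_{S}^{\ast}b=\int_{0}^{u}b(v)dv$, then $\|A_S^*b_n\|\to 0$ forces $b_0=0$) that $A_{S}\dot{\mathcal{G}}_{g}$ is dense in $L_{2}^{0}(S)$, so the closed tangent space is maximal. Only after that identification is it legitimate to compute with the surrogate paths $(1+\eta b)S$ as you (and the paper, subsequently) do.

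Two smaller remarks. First, your integration-by-parts route to \eqref{eq2.3}—using $\tfrac{d}{dt}E\{\phi(\Utzero)\mid\Utzero>t\}=-\lambda(t)R\phi(t)$ together with $\int\phi S=0$—is correct and is exactly what the paper obtains by citing Proposition A.3.6 of \cite{BKRW}, so that part is fine. Second, your closing concern about attainment (whether $Rb^{*}=c^{*}$ is solvable in $L_{2}^{0}(S)$) is a legitimate point on which the paper is actually silent; it can be handled more simply than your Volterra/ODE proposal, since it suffices that $c^{*}$ lie in the closure of the range of $R$ in the appropriate $L_{2}$ norm, the stochastic-integral map being continuous. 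But that refinement is secondary; the missing identification of $\dot{\mathcal{P}}_{S}$ as the maximal tangent space, i.e.\ the content of Lemma~\ref{lem1}, is the gap you must fill for the proof to stand.
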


\begin{proof}
	The likelihood for one observation $(\Ui \wedge \Uci,\delta_{i},\tilZ_{i})$ is given by
	\begin{eqnarray*}
		l(\theta)=\left\{g_{\Ut}(U_{i})\right\}^{\delta_{i}}\left\{\int_{\Uci}^{\infty}g_{\Ut}(u)du\right\}^{1-\delta_{i}}h_{Z,\theta}(\tilZ_{i}).
	\end{eqnarray*}
	Taking log and differentiating with respect to $\theta$, we obtain the ordinary score for $\theta$ at $\theta=\theta_0$, 
	\[\dot{l}_{\theta_0}=\tilZ_i\left[\delta_{i} \phi(\Ui) + (1-\delta_{i}) E\left\{\phi(Ut)\mid\Ut > \Uci\right\}\right]+(\tilZ_{i} - E \tilZ).\]
	The expression in \ref{eq2.3} for the ordinary score function for $\theta$ can be derived by noting that the quantity in brackets on the right hand side of the above expression is a stochastic integral with respect to the counting-process martingale in \ref{eq2.2} \citep{BKRW}, using proposition A.3.6 in \cite{BKRW}. 
	
	Next, we can conclude from the Lemma~\ref{lem1} below that the tangent space $\dot{\mathcal{Q}}_{S}$ for $S$ can be considered the maximal tangent space $L_{2}^{0}(S)$. Hence the tangent space
	for $S$ can be expressed through the one dimensional submodels $\eta\mapsto S_{\eta}(t)=(1+\eta b(t))S(t)$ for any
	$b\in L_2^0(S)$, which yield the one dimensional baseline hazard submodels 
	\begin{equation*}
	\eta\mapsto \lambda_{\eta}(t)=\frac{S_{\eta}(t)}{\int_t^{\infty}S_{\eta}(v)dv}.
	\end{equation*}
	Differentiating the likelihood with respect to $\eta$ and setting $\theta=\theta_0$ now yields the score given in~\ref{eq2.4}. In order to find $\Pi_{0}(\dot{l}_{\tzero}\rvert \dot{\mathcal{P}}_{S}) = \dot{l}_{S}b^{*}$ we find $b^{*} \in L^{0}_{2}(S)$ such that $\dot{l}_{\tzero} - \dot{l}_{S}b^{*} \ \perp \ \dot{l}_{S}b$ for all $b\in L_{2}^{0}(S)$. That is $E\left\{ \left(\dot{l}_{\tzero} - \dot{l}_{S}b^{*}\right)\dot{l}_{S}b\right\}=0$. Note that $\dot{l}_{\tzero} - \dot{l}_{S}b^{*} = \int _{-\infty}^{\Uctzero} (\tilZ R\phi - Rb^{*})dM(s) + (\tilZ - E\tilZ)$. Conditioning on $\tilZ$ and $\Uctzero$ and using the fact that $\Utzero$ is  distributed independently of $\tilZ$ and $\Uctzero$ we obtain
	%which also results in $E\{(\tilZ-E\tilZ)\dot{l}_{S}b\} = 0$ for all $b \in L_{2}^{0}(S)$ (as we shall see later), we have
	\begin{align*}
	& E\left\{ \left(\dot{l}_{\tzero} - \dot{l}_{S}b^{*}\right)\dot{l}_{S}b\right\} \\
	& =  E E\left\{(\dot{l}_{\tzero} - \dot{l}_{S}b^{*})\dot{l}_{S} b \mid \tilZ,\Uctzero \right\}\\
	& =  E E\left\{ \int_{0}^{\Uctzero}(\tilZ R\phi(s)-R b^{*}(s))Rb(s)I\{\Utzero \ge s\}\lambda_{\Utzero}(s)ds \mid \tilZ,\Uctzero\right\} \\
	& =  E\left\{ \int_{0}^{\Uctzero}(\tilZ R\phi(s)-R b^{*}(s))Rb(s)dF_{\Utzero}(s)\right\} \\
	& = \int \left\{E(\tilZ I\{\Uctzero \ge s\})R\phi(s) - EI\{\Uctzero \ge s\}Rb^{*}(s)\right\}Rb(s)dF_{\Utzero}(s).
	\end{align*}
	The second equality above is obtained by using the result that if $Y_{i} = \int f_{i}dM, i =1,2$, then 
	\begin{equation}
	EY_{1}Y_{2} = E\int f_{1} f_{2} d\langle M,M \rangle =E\int f_{1}(s)f_{2}(s)I\{\Utzero \ge s\}d\Lambda(s).\nonumber
	\end{equation}
	Thus $E\left\{ \left(\dot{l}_{\tzero} - \dot{l}_{S}b^{*}\right)\dot{l}_{S}b\right\} =0$ for all $b \in L_{2}^{0}(S)$ if
	\begin{equation*}
	Rb^{*}(s) = \frac{E\{\tilZ I(\Uctzero \ge s)\}}{EI(\Uctzero \ge s)}R\phi(s) = E(\tilZ | \ \Uctzero \ge s)R\phi(s).
	\end{equation*}
	Thus the projection of $\dot{l}_{\theta}$ on $\dot{\mathcal{P}}_{s}$ is given by
	\begin{equation}
	\Pi_{0}(\dot{l}_{\theta_{0}}\rvert \dot{\mathcal{P}}_{S})=\int_{0}^{\Uctzero} E(\tilZ | \ \Uctzero \ge s)R\phi(s)dM(s).
	\label{eq2.6}
	\end{equation}
	
	Now for finding $\dot{\mathcal{P}}_{h}$ for $h \in \mathcal{H}'$, we consider the one-parameter path $\eta\mapsto h_{\eta} = (1+\eta k)h$, where $k\in L_{2}^{0}(h)$. The score operator for $h$ is given by
	\begin{displaymath}
	\dot{l}_{h}k = k - \frac{\int e^{\theta_{0}'z}k(z)h(z)dz}{\int e^{\theta_{0}'z}h(z)dz} \equiv m(z),
	\end{displaymath}
	for $k\in L_{2}^{0}(h)$. Note that $\int k(z)e^{\theta_{0}'z}h(z)dz = 0$. If $h$ is unrestricted then the tangent space can be taken to be the orthocomplement of the linear span of $e^{\theta_{0}'z}$, i.e., $[e^{\theta_{0}'z}]^{\perp}$ in $L_{2}(h)$. Since $\Utzero$ is distributed independently of $\tilZ$ and $\Uctzero$, $E_{0}\{k(\tilZ)\dot{l}_{S}b\} = 0$ for any $k\in [e^{\theta_{0}'z}]^{\perp}$ and $\dot{l}_Sb \in \dot{\mathcal{P}}_{S}$, i.e., $\dot{\mathcal{P}}_{S} \perp \dot{\mathcal{P}}_{h}$. Since $(z - E\tilZ) \in [e^{\theta_{0}'z}]^{\perp}$, we obtain
	\begin{equation}
	\Pi_{0}(\dot{l}_{\theta_{0}} | \ [e^{\theta_{0}'z}]^{\perp})  = z - E\tilZ .
	\label{eq2.7}
	\end{equation}
	Now replacing $z$ with $\tilZ$ and subtracting \ref{eq2.6} and \ref{eq2.7} from \ref{eq2.3} yields the efficient score given in \ref{eq2.5}. The accelerated failure time model
	for $\tilde{T}$ given $\tilde{Z}$ is equivalent to the log-linear model $Y=\log(\tilde{T})=-\theta'\tilde{Z}+\epsilon$, where $\epsilon$ has hazard function 
	\begin{eqnarray}
	\lambda(t)&=&\lambda_{\tilde{T}}(e^t)e^t,\label{eq3.1}
	\end{eqnarray}
	and $\lambda_{\tilde{T}}(u)$ is the baseline hazard for $\tilde{T}$. In the current setting,
	\begin{eqnarray}
	\lambda_{\tilde{T}}(u)&=&\frac{S(u)}{\int_u^{\infty}S(v)dv}.\label{eq3.2}
	\end{eqnarray} 
	This model is the same as the linear regression model for $Y$ but with a sign change on $\theta$. 
	The efficient score for the linear regression model under right-censoring is given in Expression (27) on Page 149 of \cite{BKRW} and has the same form as \ref{eq2.5}, except for changes in 
	parameter and variable notation. Specifically, the function $R\phi(u)$ in \ref{eq2.5} equals the negative of $R\psi(t)=-\dot{\lambda}(t)/\lambda(t)$ defined in Expression (23) of \cite{BKRW}, after replacing $t$ with $\log(u)$, where the negative is due to the sign change. To see this, note that 
	\begin{eqnarray*}
		-R\psi(t)&=&\frac{\dot{\lambda}(t)}{\lambda(t)}\;=\; 1+\frac{\dot{\lambda}_{\tilde{T}}(e^t)e^t}{\lambda_{\Tilde{T}}(e^t)}\\
		&=&1+\frac{\dot{\lambda}_{\tilde{T}}(u)u}{\lambda_{\tilde{T}}(u)}\;=\;1-\frac{g(u)}{S(u)}+\frac{S(u)}{\int_u^{\infty}S(v)dv}\\
		&=&R\phi(u).
	\end{eqnarray*}
	The first row follows from~(\ref{eq3.1}), the second row follows from the substitution $u=e^t$ followed by~(\ref{eq3.2}), and the last row follows from the definitions of $R$ and $\phi$.
\end{proof}

Thus the efficient score is free of $h$, so to estimate $\theta$ efficiently, one does not need to estimate the covariate distribution. Hence, one does not need to impose an additional identifiability condition for $h$ such as the mean-zero assumption. The efficient information is 
\begin{equation}
\tilde{I}_{\tzero} = E\int_{0}^{\Uctzero}D(\tilZ,C,\theta_{0},s)D(\tilZ,C,\theta_{0},s)'(R\phi)^{2}(s)dF_{\Utzero}(s),
\label{eq2.8}
\end{equation}
where $D(\tilZ,C,\theta_{0},s)=[ \tilZ - E\{\tilZ | \ \Uctzero \ge s\} ]$. This is somewhat complicated to estimate, but the approach described in Remark~2 of \cite{Zeng07} will yield a consistent estimator which can be 
used for inference on $n^{1/2}(\hat{\theta}_n-\theta_0)$.

Since the backward recurrence times are uncensored, we can assume that the censoring times are infinite with $M(t) = I\{\Ut\le t\}$ and $Ra(t) = a(t)$. Thus 
the efficient score for the backward recurrence time simplifies to \begin{equation}
\tilde{l}_{\tzero,\lambda} = (\tilZ - E\tilZ)[1 - \Utzero \lambda\{\Utzero\}]
\label{eq2.10}
\end{equation}
and the efficient information becomes 
\begin{equation}
E\{ \tilde{l}_{\tzero}\tilde{l}_{\tzero}'\}=E\{ (\tilZ - E\tilZ)(\tilZ - E\tilZ)' \} E[1-\Utzero \lambda\{\Utzero\}]^{2}.
\label{eq2.11}
\end{equation}

Before presenting Lemma~\ref{lem1}, we provide a few needed definitions.
Let ${\cal Q}$ be the model consisting of densities on $\mathcal{R}^{+}$ of the form $S_g(u)/\int_0^{\infty}S_g(v)dv$, where $g\in{\cal G}$; and let $\dot{\cal Q}_S$ and $\dot{\cal G}_g$ be the respective tangent sets for ${\cal Q}$ and ${\cal G}$ at $S$ and at $g$, where $g$ satisfies $S_g=S$. Lemma~\ref{lem1} establishes that $\dot{Q}_S=L_2^0(S)$, which is needed in the proof of Theorem~\ref{thm:1} to identify $\dot{\mathcal{P}}_S$, a key technical step. We will be using score operators which allow us to construct scores for a model of interest from scores for a simpler model \citep[see, e.g., Chapter 18 of][]{K08}. 

\begin{lem}
	If  $A_{S}$ is the score operator mapping tangents in $\dot{\mathcal{G}}_{g}$ to $\dot{\mathcal{Q}}_{S}$, then $A_{S}\dot{\mathcal{G}}_{g}$ is dense in the maximal tangent set $L_{2}^{0}(S)$ for ${\cal Q}_S$, i.e.,
	$\dot{\cal Q}_S=L_2^0(S)$.\label{lem1}
\end{lem}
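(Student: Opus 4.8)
The plan is to exploit the fact that $\mathcal{G}$, being the fully nonparametric family of \emph{all} densities on $\Re^{+}$, has maximal tangent set $\dot{\mathcal{G}}_{g}=L_{2}^{0}(g)$, and that $\dot{\mathcal{Q}}_{S}$ is by construction the closure of $A_{S}\dot{\mathcal{G}}_{g}$ inside the maximal tangent set $L_{2}^{0}(S)$. Thus it suffices to show that the closure of the range $A_{S}L_{2}^{0}(g)$ is all of $L_{2}^{0}(S)$, which I would establish by proving that the orthogonal complement of this range in $L_{2}(S)$ is trivial.

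First I would compute $A_{S}$ explicitly. Along the path $g_{\eta}=(1+\eta a)g$ with $a\in L_{2}^{0}(g)$, the survival function is $S_{g_{\eta}}(u)=S(u)+\eta A(u)$, where $A(u)=\int_{u}^{\infty}a(v)g(v)dv$ satisfies $A(0)=A(\infty)=0$. Differentiating $\log\{S_{g_{\eta}}(u)/\int S_{g}(v)dv\}$ at $\eta=0$ and using Fubini to write $\int_{0}^{\infty}A(v)dv=\int_{0}^{\infty}va(v)g(v)dv$, I obtain
\[
A_{S}a(u)=\frac{A(u)}{S(u)}-\frac{\int_{0}^{\infty}va(v)g(v)dv}{\int S_{g}(v)dv}.
\]
A direct check confirms $A_{S}a\in L_{2}^{0}(S)$, as it must be.

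Next I would take $\psi\in L_{2}^{0}(S)$ orthogonal to the range, so that $\int_{0}^{\infty}\{A_{S}a(u)\}\psi(u)S(u)du=0$ for every $a\in L_{2}^{0}(g)$. Because $\int\psi S=0$, the constant term in $A_{S}a$ contributes nothing, leaving $\int_{0}^{\infty}A(u)\psi(u)du=0$. Applying Fubini once more converts this into
\[
\int_{0}^{\infty}a(v)g(v)\Psi(v)dv=0,\qquad \Psi(v)=\int_{0}^{v}\psi(u)du,
\]
for all mean-zero $a$. I would then localize: for any compact $[c,d]$ contained in the support of $g$, restricting $a$ to mean-zero functions supported on $[c,d]$ forces $\Psi$ to be $g$-almost-everywhere constant on $[c,d]$, since the orthogonal complement of the mean-zero functions in $L_{2}([c,d],g)$ consists of the constants. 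As $[c,d]$ is arbitrary and $\Psi$ is continuous with $\Psi(0)=0$, I conclude $\Psi\equiv0$ on the support of $g$, hence $\psi=\Psi'=0$ there. Thus the orthogonal complement is trivial and $\dot{\mathcal{Q}}_{S}=L_{2}^{0}(S)$.

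The main obstacle I anticipate is this final step: passing rigorously from ``$\int ag\Psi=0$ for all mean-zero $a$'' to ``$\Psi$ constant'' requires control of the integrability of $\Psi$ against $g$, which is not automatic from $\psi\in L_{2}^{0}(S)$ alone. Localizing to compact subintervals of the support, where $S$ and hence $S^{-1}$ is bounded so that $\Psi$ is finite by Cauchy--Schwarz, is what makes the argument clean, and it also sidesteps any issue at the endpoints $0$ and $\infty$. A secondary point to verify is the interchange of integration and the differentiation under the integral defining $A_{S}$; both are justified by restricting the tangents $a$ to the dense subset of bounded functions with compact support in $(0,\infty)$, which is all the density conclusion requires.
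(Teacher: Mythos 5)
Your proof is correct and follows essentially the same route as the paper's: both compute the score operator $A_S$ along perturbations of $g$, use Fubini to identify its adjoint as the integration operator $b\mapsto\int_0^{\cdot}b(v)\,dv$, and then conclude density of the range by showing that any element of its orthogonal complement in $L_2^0(S)$ has a primitive that is $g$-a.e.\ constant and hence vanishes. The differences are cosmetic --- you argue directly with a fixed orthogonal element $\psi$ where the paper runs a contradiction with approximating sequences from its dense subclasses ${\cal A}$ and ${\cal B}$, and your explicit treatment of the additive constant via continuity and $\Psi(0)=0$ (under the same implicit assumption, shared with the paper, that the support of $g$ fills out $\{S>0\}$) is, if anything, slightly more careful.
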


\begin{proof}
 Let $g$ be the density on $\mathcal{R}^{+}$ corresponding to $S$. Consider the following parametric path through $g$:
	\begin{equation*}
	\eta\mapsto g_{\eta}=\frac{\psi(\eta a)g}{\int\psi(\eta a)g},
	\end{equation*}
	where $\psi : \mathcal{R} \mapsto \mathcal{R}^{+}$ is bounded, continuously differentiable with bounded derivative $\psi'$ satisfying $\psi(0) = \psi'(0) = 1$ and $a\in L_{2}^{0}(g)$. Note that $L_{2}^{0}(g)$ is the closure within $L_2(g)$ of the derivatives of curves $g_{\eta}$ with respect to $\eta$ and $L_{2}^{0}(S)$ is the closure within $L_2(S)$ of derivatives of curves $\log(p_{g_{\eta}})$ with respect to $\eta$, where 
	\begin{eqnarray*}
		p_{g_{\eta}}(u)=\frac{S_\eta(u)}{\int S_\eta(t)dt},
	\end{eqnarray*}
	and $S_\eta$ is the survival function corresponding to $g_\eta$. 
	Thus, $L_{2}^{0}(g)$ is the maximal non-parametric tangent set for $\mathcal{G}$ while $L_{2}^{0}(S)$ is the maximal tangent set for $\mathcal{Q}_S$. The corresponding parametric submodel for $p_{g}$ is 
	\begin{equation*}
	p_{g_{\eta}}(u)= \frac{\int_{u}^{\infty} \psi(\eta a)(v)g(v)dv}{\int_{0}^{\infty}\int_{w}^{\infty}\psi(\eta a)(v)g(v)dv dw} .
	\end{equation*}
	Thus, the tangent set $\dot{\mathcal{Q}}_{S}$ (which consists of scores with respect to the one parameter models $p_{g_{\eta}}$) is given by the operator
	\[A_{S}a(u)=\frac{\int_{u}^{\infty}a(v)g(v)dv}{S(u)} -\int_{0}^{\infty}\frac{\int_{w}^{\infty}a(v)g(v)dv}{\int_{0}^{\infty}S(v)dv} dw.\]
	Let ${\cal A}$ be space of the bounded functions on $\mathcal{R}^{+}$ and ${\cal B}$ be the subset of ${\cal A}$ of functions which attain zero at all time points large enough. It is easy to verify that ${\cal A}$ is dense in $L_2^0(g)$ and that ${\cal B}$ is dense in $L_2^0(S)$, and, moreover, that $A_S a\in L_2^0(S)$ for all $a \in {\cal A} \cap L_2^0(g)$ and that $A_S^{\ast} b\in L_2^0(g)$ for all $b \in {\cal B} \cap L_2^0(S)$, where $A_S^{\ast}$ is the adjoint of $A_S$ defined as the solution to
	\[\langle A_S a, b\rangle_{L_2^0(S)}=\langle a, A_S^{\ast} b\rangle_{L_2^0(g)},\]
	for all $a\in {\cal A} \cap L_2^0(g)$ and $b\in {\cal B} \cap L_2^0(S)$. This relation yields that $A_S^{\ast}b=\int_0^u b(v)dv$. 
	
	By definition of $A_S$, $\dot{\mathcal{Q}}_{S}$ is the closed linear span of $A_S {\cal A}$ in $L_2^0(S)$, and thus 
	$\dot{\mathcal{Q}}_{S}\subset L_2^0(S)$. To prove the lemma, we need to verify that $L_2^0(S)\subset\dot{\mathcal{Q}}_{S}$ also holds. Suppose there is a $b_0\in L_2^0(S)$ which is not in $\dot{\mathcal{Q}}_{S}$. Then there exists a sequence $\{b_n\}\in{\cal B} \cap L_2^0(S)$ such that
	$\|b_n-b_0\|_{L_2^0(S)}\rightarrow 0$ and $0=$
	\begin{eqnarray*}
   \lim_{n\rightarrow\infty}\sup_{a\in{\cal A}:\, \|a\|_{L_2^0(g)}=1}\langle A_S a, b_n\rangle_{L_2^0(S)} & = \lim_{n\rightarrow\infty}\sup_{a\in{\cal A}:\, \|a\|_{L_2^0(g)}=1}\langle a,A_S^{\ast} b_n\rangle_{L_2^0(g)}\\
		  =	\lim_{n\rightarrow\infty}\|A_S^{\ast}b_n\|_{L_2^0(g)}.&
	\end{eqnarray*}

	This now implies that $\int_0^{\infty}\left\{\int_0^u b_n(v)dv\right\}^2 g(u)du\rightarrow 0$.  We can now show that for any $0<c<\infty$ for which $S(c)>0$,
	\begin{eqnarray*}
		\int_0^c\left\{\int_0^u b_0(v)dv\right\}^2 g(u)du&\leq&2\int_0^c\left\{\int_0^u b_n(v)dv\right\}^2g(u)du\\
		&&+2\int_0^c\left[\int_0^u\left\{b_n(v)-b_0(v)\right\}dv\right]^2g(u)du\\
		&\rightarrow& 0,
	\end{eqnarray*}
	as $n\rightarrow\infty$ by previous arguments combined with some analysis. Since $c$ was an arbitrary choice for which $S(c)>0$, we obtain that $b_0=0$ $S$-almost surely, and the desired conclusion follows.
\end{proof}

\subsection{Inference for Length-Biased Data}
A similar result may be obtained for length-biased data by replacing in the proof of Theorem \ref{thm:1} $S(u)/\int{S(u)d(u)}$ with $\tilde{g}=ug(u)/\int{S(u)d(u)}$, where, as before, $g$ is the density generating $S$. This yields the following result: 
\begin{thm}\label{thm:2}
	Using the same notation as Theorem \ref{thm:1} and under the same conditions, the efficient score for $\theta$ at $\theta=\tzero$ for length-biased data is
	\begin{equation}
	\tilde{l}_{\theta,S} = \int_{0}^{\Uctzero}[z - E\{\tilZ \mid \Uctzero \ge s\}]R \phi(s)dM(s),
	\label{eq2.9}
	\end{equation}
	where for $a\in L_{2}^{0}(S)$,
	\begin{equation*}
	Ra(t) = a(t) - \frac{\int_{t}^{\infty}a(u)ug(u)du}{\int_{t}^{\infty}ug(u)du}.
	\end{equation*}
	with $\phi(u) = 1- u g(u)/S(u)$,
	\begin{equation}
	M(t) = I\{\Ut\le t\} - \int_{0}^{t} I\{\Ut > s\}\lambda_{\Ut}(s)ds,
	\label{eq2.12}
	\end{equation}
	and \begin{equation}
	\lambda_{\Ut}(u) = \frac{ug\{e^{(\theta-\theta_{0})'z}u\}}{\int_{u}^{\infty} ug\{\ettzeroz w\}dw}.
	\label{eq2.13}
	\end{equation}
\end{thm}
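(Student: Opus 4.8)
The plan is to follow the proof of Theorem~\ref{thm:1} almost verbatim, substituting the length-biased baseline density $\tilde g(u)=ug(u)/\int S(v)dv$ for the recurrence-time density $S(u)/\int S(v)dv$ wherever the latter occurs. The one change that is not cosmetic enters at the outset: dividing the joint density~(\ref{eq1.2new}) of $(T_{\mathrm{LB}},\tilZ)$ by its length-biased covariate marginal gives a conditional density of $T_{\mathrm{LB}}$ carrying $e^{-2\theta'z}$, so that the reparametrized variable $U(\theta)=e^{-\theta'\tilZ}T_{\mathrm{LB}}$ has conditional density proportional to $e^{2(\theta-\theta_0)'z}\,u\,g\{e^{(\theta-\theta_0)'z}u\}$ --- \emph{two} powers of the exponent rather than one --- with conditional hazard $\lambda_{\Ut}$ of~(\ref{eq2.13}) and counting-process martingale $M$ of~(\ref{eq2.12}). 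First I would form the one-observation likelihood $\{g_{\Ut}(\Ui)\}^{\delta_i}\{\int_{\Uci}^\infty g_{\Ut}(u)du\}^{1-\delta_i}h_{Z,\theta}(\tilZ_i)$, take logarithms and differentiate at $\theta=\theta_0$. The extra power of the exponent is exactly what makes the $\Ui$-dependent part of the score equal to the length-biased $\phi$ obtained from the same substitution; Proposition~A.3.6 of \cite{BKRW} then rewrites this part as the stochastic integral $\tilZ\int_0^{\Uctzero}R\phi(s)\,dM(s)$, while the covariate factor contributes the additive term $(\tilZ-E\tilZ)$ exactly as in~(\ref{eq2.3}).

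The structural half of the argument transfers without change, because the observed covariate law is shared by the two problems. The covariate density $h_{\tilZ,\theta}(z)=e^{\tz}h(z)/\int e^{\theta'u}h(u)du$ is \emph{identical} to the one in Theorem~\ref{thm:1}, and model~(\ref{eq1_lb}) again makes the baseline variable independent of $\tilZ$; hence the score operator for $h$, its tangent space $[e^{\theta_0'z}]^\perp$, the orthogonality $\dot{\mathcal{P}}_S\perp\dot{\mathcal{P}}_h$, and the projection $z-E\tilZ$ of~(\ref{eq2.7}) hold verbatim. Reproducing the projection onto $\dot{\mathcal{P}}_S$ from $\dot{l}_Sb=\int_0^{\Uctzero}Rb(s)\,dM(s)$ as in~(\ref{eq2.4}) --- conditioning on $(\tilZ,\Uctzero)$, using the independence of $\Utzero$, and invoking the predictable-covariation identity $EY_1Y_2=E\int f_1f_2\,d\langle M,M\rangle$ --- forces $Rb^*(s)=E\{\tilZ\mid\Uctzero\ge s\}R\phi(s)$ exactly as in~(\ref{eq2.6}). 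Subtracting this projection and~(\ref{eq2.7}) from the ordinary score then yields the efficient score~(\ref{eq2.9}).

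The hard part will be the analogue of Lemma~\ref{lem1}, which the paper singles out as the key step in identifying $\dot{\mathcal{P}}_S$. Here the substitution is \emph{not} mechanical: the score operator $A_S$ cannot be produced by replacing $S$ with $ug$ in its formula but must be recomputed from the length-biased path $p_{g_\eta}(u)=ug_\eta(u)/\mu_{g_\eta}$, and doing so collapses $A_S$ to the centering map $a\mapsto a-\int u\,a(u)g(u)du/\mu_g$ rather than the Volterra-type operator of Lemma~\ref{lem1} --- the reason being that $\tilde g$ depends on $g$ pointwise, not through its tail integral. Its range over a dense class of bounded $a$ still exhausts the maximal tangent set, so $\dot{\mathcal{Q}}_S$ may again be taken maximal; but the non-monotonicity of $\tilde g=ug/\mu_g$ introduces a genuinely new difficulty: the relevant $L_2^0$ space changes, and one must compare the finite measures $S\,du$, $g\,du$ and $ug\,du$ near $0$ and near $\infty$ --- the factor $u$ making $ug$ lighter than $g$ at the origin --- to ensure the lifted perturbations stay square-integrable and the adjoint is well defined, with (A3) and (A4) securing finiteness of the information. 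The final step is a consistency check: via the log-linear reduction $Y=-\theta'\tilZ+\epsilon$ one verifies that $R\phi$ reproduces the canonical weight $1+u\dot\lambda_{\mathrm{LB}}(u)/\lambda_{\mathrm{LB}}(u)$ for the length-biased hazard $\lambda_{\mathrm{LB}}(u)=ug(u)/\int_u^\infty vg(v)dv$, matching Expression~(27) of \cite{BKRW}. Since~(\ref{eq2.9}) is then free of $h$, the naive estimator from the conditional likelihood is again fully efficient.
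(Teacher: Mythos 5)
Your proposal is correct and follows the paper's own route: reduce to the recurrence-time argument of Theorem \ref{thm:1} by replacing the baseline $S(u)/\int S(v)dv$ with $\tilde g(u)=ug(u)/\int S(v)dv$, rederive the ordinary score, keep the covariate half (score operator for $h$, tangent space $[e^{\theta_0'z}]^{\perp}$, orthogonality to the baseline tangent space, projection $\tilZ-E\tilZ$) verbatim, and redo the projection onto the baseline tangent space to force $Rb^{*}(s)=E\{\tilZ\mid\Uctzero\ge s\}R\phi(s)$. Your conditional density $e^{2(\theta-\theta_0)'z}\,u\,g\{e^{(\theta-\theta_0)'z}u\}/\mu_g$ is exactly the paper's $e^{(\theta-\theta_0)'z}\tilde g\{e^{(\theta-\theta_0)'z}U_i\}$ once $\tilde g$ is expanded, so the ``two powers'' observation is the same computation stated more explicitly.

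The one place you genuinely depart from --- and in fact sharpen --- the paper is the key lemma. The paper's Lemma \ref{lem2} writes down the length-biased path $\tilde g_{\eta}(u)=u\psi(\eta a)g(u)/\int v\psi(\eta a)g(v)dv$ and then asserts the proof of Lemma \ref{lem1} can be followed ``essentially verbatim.'' You correctly point out that the substitution is not mechanical: differentiating this path yields the centering operator $a\mapsto a-\int v\,a(v)g(v)dv/\mu_g=a-E_{\tilde g}a$, not the Volterra-type operator $A_S$ of Lemma \ref{lem1}, precisely because $\tilde g$ depends on $g$ pointwise rather than through a tail integral. This makes the density claim easier, not harder: the range of the centering map over bounded, $g$-centered $a$ is dense in $L_2^0(\tilde g)$ by direct truncation (or, if one insists on mirroring Lemma \ref{lem1}, via the adjoint, which becomes the multiplication operator $A^{\ast}b=ub/\mu_g$ and still yields $b_0=0$ a.s.\ in the contradiction step). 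Your insistence that the relevant space is $L_2^0(\tilde g)$ rather than $L_2^0(S)$, and that $\phi$ must be read as the substituted $1+u\tilde g'(u)/\tilde g(u)=2+ug'(u)/g(u)$, also silently corrects notational slips carried over from Theorem \ref{thm:1} into the statements of Theorem \ref{thm:2} and Lemma \ref{lem2}; your closing check that $R\phi$ matches the canonical weight $1+u\dot\lambda_{\mathrm{LB}}(u)/\lambda_{\mathrm{LB}}(u)$ for $\lambda_{\mathrm{LB}}(u)=ug(u)/\int_u^{\infty}vg(v)dv$ verifies this and parallels the end of the paper's proof of Theorem \ref{thm:1}. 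In short: same architecture, but your treatment of the lemma supplies the recomputation the paper elides.
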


\begin{proof}
	The proof of Theorem \ref{thm:2} is very similar to the proof of Theorem \ref{thm:1}. It follows along the same lines with a few minor differences, which are outlined below:
	The likelihood for one observation $(\Ui \wedge \Uci,\delta_{i},\tilZ_{i})$ is given by
	\begin{eqnarray*}
		l(\theta)=\left\{g_{\Ut}(U_{i})\right\}^{\delta_{i}}\left\{\int_{\Uci}^{\infty}g_{\Ut}(u)du\right\}^{1-\delta_{i}}h_{Z,\theta}(\tilZ_{i}).
	\end{eqnarray*}
	Here, the actual form of $g_{\Ut}(U_{i})=\ettzeroZi\tilde{g}(\ettzeroZi U_i)$ is different from Theorem~\ref{thm:1}. Thus we need to replace ${\cal P}_S$ with ${\cal P}_{\tilde{g}}$, where the map
	$g\mapsto\tilde{g}$ is as implicitly defined above just before the statement of Theorem~\ref{thm:2}. Specifically, this is the new model for $g$ holding $\theta$ and $h$ fixed at their true values. The other models and submodels are the same as for Theorem \ref{thm:1} except that $S$ and $S_0$ are replaced by $\tilde{g}$ and $\tilde{g}_0$. Taking log of $l(\theta)$ and differentiating with respect to $\theta$ 
	we obtain 
	$\dot{l}_{\theta_0}=\tilZ_i\left[\delta_{i} \phi(\Ui) + (1-\delta_{i}) E\left\{\phi(U(\theta_0))\mid U(\theta_0) > \Uci\right\}\right] + (\tilZ_{i} - E \tilZ)$ as the ordinary score for $\theta$ at $\theta=\theta_0$.
	The quantity in brackets on the right hand side is a stochastic integral with respect to the counting-process martingale in (\ref{eq2.12}) and is thus also a martingale. Using this, we can obtain the ordinary score
	\begin{equation}
	\dot{l}_{\tzero}= \tilZ\int_{0}^{\Uctzero} R\phi(s)dM(s) + (\tilZ - E\tilZ).
	\label{eq4.3}
	\end{equation} 
	
	Let ${\cal G}$, $\dot{\cal G}_g$, and the model ${\cal Q}'$ be as defined in Section 2 in the main text. Let $\dot{Q}_{\tilde{g}}'$ be the tangent set for ${\cal Q}'$ at $\tilde{g}$, and let $\dot{\cal P}_{\tilde{g}}$ be the tangent set corresponding to the new model ${\cal P}_{\tilde{g}}$. By using Lemma~\ref{lem2} below, which is similar to Lemma~\ref{lem1} but adapted to length-biased data, we can conclude that the tangent space $\dot{\mathcal{Q}}_{\tilde{g}}'$ can be taken to be the maximal tangent space $L_{2}^{0}(\tilde{g})$, and thus we obtain that the score operator $\dot{l}_{\tilde{g}}$ for $\tilde{g}$ is
	\begin{equation}
	\dot{l}_{\tilde{g}}b = \int_{0}^{\Uctzero} Rb(s)dM(s).
	\label{eq4.4}
	\end{equation}
	In order to find $\Pi_{0}(\dot{l}_{\tzero}\rvert \dot{\mathcal{P}}_{\tilde{g}}) = \dot{l}_{\tilde{g}}b^{*}$, we find $b^{*} \in L^{0}_{2}(\tilde{g})$ such that $\dot{l}_{\tzero} - \dot{l}_{\tilde{g}}b^{*} \ \perp \ \dot{l}_{\tilde{g}}b$ for all $b\in L_{2}^{0}(\tilde{g})$. That is $E\left\{ \left(\dot{l}_{\tzero} - \dot{l}_{\tilde{g}}b^{*}\right)\dot{l}_{\tilde{g}}b\right\}=0$. Note that $\dot{l}_{\tzero} - \dot{l}_{\tilde{g}}b^{*} = \int _{-\infty}^{\Uctzero} (\tilZ R\phi - Rb^{*})dM(s) + (\tilZ - E\tilZ)$.
	
	After this, the proof in Theorem \ref{thm:1} can be followed verbatim to obtain the desired result.
\end{proof}

We now present Lemma~\ref{lem2} required for the proof of Theorem~\ref{thm:2}. This is a modification of Lemma~\ref{lem1} for the length-biased setting.
\begin{lem}
	Consider the semi-parametric model $\mathcal{P}=\{P_{g}: g\in \mathcal{G}\}$, where the distribution $P_{g}$ has density $p_{g}(u) = ug(u)/\int S_{g}$ and $\mathcal{G}$ is a collection of densities on $\mathcal{R}^{+}$. Let  $\dot{\mathcal{G}}_{g}$ and $\dot{\mathcal{P}}_{g}$ be the tangent sets for the models $\mathcal{G}$ and $\mathcal{P}$ respectively at $g$. If  $A_{g}$ is the score operator mapping tangents in $\dot{\mathcal{G}}_{g}$ to $\dot{\mathcal{P}}_{g}$ then, $A_{g}\dot{\mathcal{G}}_{g}$ is dense in the maximal tangent set $L_{2}^{0}(S)$ for $\mathcal{P}$.\label{lem2}
\end{lem}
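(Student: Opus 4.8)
The plan is to repeat the argument of Lemma~\ref{lem1} with the single substitution of the length-biased density $\tilde{g}(u)=ug(u)/\int S$ (which is exactly the $p_g$ of the statement) for the survival-function density $S/\int S$ used there; correspondingly the maximal tangent set in which we seek density is $L_2^0(\tilde g)$, matching the notation of the proof of Theorem~\ref{thm:2}. First I would take the same path $\eta\mapsto g_\eta=\psi(\eta a)g/\int\psi(\eta a)g$ through $g$, with $a\in L_2^0(g)$, so that $L_2^0(g)$ is again the maximal tangent set for $\mathcal G$. Inserting $g_\eta$ into $p_{g_\eta}(u)=ug_\eta(u)/\int S_{g_\eta}$, the normalizer $\int\psi(\eta a)g$ cancels between numerator and denominator, and differentiating $\log p_{g_\eta}(u)$ at $\eta=0$ gives the score. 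The only genuinely new step is the derivative of $\int_0^\infty\int_w^\infty\psi(\eta a)(v)g(v)\,dv\,dw$ at $\eta=0$, which after a Fubini interchange equals $\int_0^\infty v\,a(v)g(v)\,dv$; hence
\[
A_g a(u)=a(u)-\frac{\int_0^\infty v\,a(v)g(v)\,dv}{\int S}=a(u)-\int a(v)\,\tilde g(v)\,dv,
\]
so that $A_g$ is simply the centering of $a$ with respect to $\tilde g$.

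Next I would compute the adjoint. For $b\in L_2^0(\tilde g)$, using $\int b\,\tilde g=0$ one gets $\langle A_g a,b\rangle_{\tilde g}=\int a\,b\,\tilde g=\int a(u)\,\{u b(u)/\int S\}\,g(u)\,du$, so
\[
A_g^{\ast}b(u)=\frac{u\,b(u)}{\int S},
\]
which already lies in $L_2^0(g)$ because $\int u b\,g/\int S=\int b\,\tilde g=0$. As in Lemma~\ref{lem1} the operator must be handled on dense subclasses, since length-biasing by $u$ makes $A_g$ unbounded: for general $a\in L_2^0(g)$ the centering constant $\int a\,\tilde g$ need not be finite. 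I would therefore restrict to the bounded functions $\mathcal A\subset L_2^0(g)$, on which $\int a\,\tilde g$ is finite and $A_g a$ is bounded (so $A_g a\in L_2^0(\tilde g)$, using $\int S<\infty$ from (A3)), and note that $\mathcal A$ is dense in $L_2^0(g)$, so that $\dot{\mathcal P}_g$ is the closed span of $A_g\mathcal A$.

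It then remains to show this closed span is all of $L_2^0(\tilde g)$, equivalently that the only $b_0\in L_2^0(\tilde g)$ orthogonal to $A_g\mathcal A$ is $b_0=0$. Given such a $b_0$, the relation $0=\langle A_g a,b_0\rangle_{\tilde g}=\int a(u)\,u b_0(u)\,g(u)\,du$ holds for every bounded $a$ with $\int a\,g=0$; centering an arbitrary bounded function $a_0$ (and using $\int u b_0\,g=(\int S)\int b_0\tilde g=0$) upgrades this to $\int a_0(u)\,u b_0(u)\,g(u)\,du=0$ for every bounded $a_0$. The integral is well defined, since Cauchy--Schwarz with $\int S<\infty$ gives $\int u|b_0|\,g=(\int S)\int|b_0|\tilde g<\infty$; taking $a_0$ to range over indicators then forces $u b_0(u)g(u)=0$ almost everywhere, i.e.\ $b_0=0$ on $(0,\infty)\cap\{g>0\}$, which is $b_0=0$ in $L_2^0(\tilde g)$. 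This is the crux, and it is where the proof is in fact simpler than Lemma~\ref{lem1}: because the adjoint $A_g^{\ast}$ is multiplication by $u/\int S$ rather than the integration operator appearing there, its kernel on $(0,\infty)$ is automatically trivial, so no delicate limiting estimate over $[0,c]$ is needed. The main thing to watch is precisely the unboundedness just noted — the weight $u$ forces all the orthogonality manipulations to be carried out on the bounded class $\mathcal A$ with the integrability check above, exactly as the classes $\mathcal A,\mathcal B$ are used in Lemma~\ref{lem1}.
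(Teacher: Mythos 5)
Your proposal is correct, and it is in fact more explicit than the paper's own proof, which merely exhibits the path $\eta\mapsto g_\eta=\psi(\eta a)g/\int\psi(\eta a)g$, writes down the induced submodel $\tilde g_\eta(u)=u\psi(\eta a)(u)g(u)/\int_0^\infty v\psi(\eta a)(v)g(v)\,dv$, and then instructs the reader to repeat the proof of Lemma~1 ``essentially verbatim.'' You follow the same architecture (parametric path, score operator, adjoint, triviality of the orthogonal complement of the range), and your computations fill in what the paper leaves implicit: the score operator is the $\tilde g$-centering $A_g a=a-\int a\,\tilde g$, and the adjoint is the multiplication operator $A_g^{\ast}b(u)=u\,b(u)/\int S$. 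Where you genuinely depart from the paper is the concluding density step: the ``verbatim'' repetition of Lemma~1 would approximate $b_0$ by a sequence $b_n$ from the compactly supported class $\mathcal{B}$, deduce $\|A^{\ast}b_n\|\rightarrow 0$, and run the limiting estimate over $[0,c]$; you instead exploit the fact that the adjoint here is multiplication by $u/\int S$ --- whose kernel on $(0,\infty)$ is trivial --- and conclude directly from $\int a_0(u)\,u\,b_0(u)g(u)\,du=0$ for all bounded $a_0$ (taking indicators) that $b_0=0$ $\tilde g$-almost everywhere. This is a cleaner route, and your integrability bookkeeping (centering an arbitrary bounded $a_0$, and checking $\int u|b_0|g=(\int S)\int|b_0|\tilde g<\infty$ by Cauchy--Schwarz) is exactly what makes it rigorous. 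You also correctly read the statement's ``$L_2^0(S)$'' as $L_2^0(\tilde g)$, consistent with how the lemma is invoked in the proof of Theorem~2, and you sensibly use the Lemma~1 normalization $\psi(0)=\psi'(0)=1$ rather than the paper's apparently garbled conditions $\psi(0)=0$, $\psi'(0)=g(0)$, under which the path is not even well defined at $\eta=0$. One small overstatement: $A_g^{\ast}b$ need not lie in $L_2^0(g)$ for every $b\in L_2^0(\tilde g)$, since $\int\{u\,b(u)\}^2 g(u)\,du=(\int S)\int u\,b^2(u)\tilde g(u)\,du$ can be infinite when $g$ has heavy tails; but this is harmless, as your final argument never uses the adjoint beyond the bounded-$a$ pairings whose absolute convergence you verified.
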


\begin{proof}
	Consider the following parametric path through $g$:
	\begin{equation*}
	\eta\mapsto g_{\eta}=\frac{\psi(\eta a)g}{\int\psi(\eta a)g},
	\end{equation*}
	where $\psi : \mathcal{R} \mapsto \mathcal{R}^{+}$ is bounded, continuously differentiable with bounded derivative $\psi'$ satisfying $\psi(0) = 0$ and $\psi'(0) = g(0)$ and $a\in L_{2}^{0}(g)$.	
	Thus, using similar notations as in Lemma 1, $L_{2}^{0}(g)$ is the maximal non-parametric tangent set for $\mathcal{G}$ while $L_{2}^{0}(\tilde{g})$ is the maximal tangent set for $\mathcal{Q}'$. The corresponding parametric submodel for $\tilde{g}$ will then be
	\begin{equation*}
	\tilde{g}_{\eta}(u)= \frac{u\psi(\eta a)g(u)}{\int_{0}^{\infty}v\psi(\eta a)g(v)dv}.
	\end{equation*}
	After this, the proof of Lemma~\ref{lem1} can be followed essentially verbatim to obtain the desired result. 
\end{proof}

\section{Simulation Studies}\label{sec:sim}

As our method does not rely on the covariate distribution, it is a special case of the model used in the paper by \cite{Zeng07} (since we assume that the covariates are constant over time). So, we use their profile likelihood approach 
to estimate $\theta$ and compare it with Klaassen's mean zero approach and also the known covariate structure approach. 
We consider only 1 covariate Z $\sim\mbox{Unif}(-1,1)$. So, $\tilde{Z}$ has density given by $\theta e^{\theta z}/(e^{\theta}-e^{-\theta})$, where $-1\leq z \leq 1$.
We take different values of  $\theta$ and assume that the error distribution is standard normal, i.e., $U$ is lognormal.  Then, we use all three methods to estimate $\theta$. For the profile-likelihood approach, we use the Gaussian kernel and a bandwidth of $h_{n}=Qn^{-1/5}$, where Q is the interquartile range of the data. We consider 1000 replicates and look at the mean bias and variance in estimating $\theta$. We also look at what happens when the covariate distribution is misspecified. For this, we consider Z$\sim\mbox{Unif}(x,1)$ for some choice of x. So,  $\tilde{Z}$ has density given by $\theta e^{\theta z}/(e^{\theta}-e^{-\theta x})$ where x$\leq$z$\leq$1. We take the values $x=-0.9$ and $-0.8$  and compare both the mean zero and  known covariate distribution, assuming Z $\sim\mbox{Unif}(-1,1)$. We take $\theta$=1 for these simulations. We consider 1000 replicates in this case as well. The results are given in Table \ref{tab:one}.

% Tue Sep 25 09:35:49 2012
\begin{table}[ht]
	\begin{center}
		\caption{Estimates for the Backward Recurrence Time Data}
		\label{tab:one}
		\begin{tabular}{ccrrcrrrc}
			\hline
			Parameters & Sample &\multicolumn{3}{c}{ Profile Likelihood Approach} & \multicolumn{2}{c}{Vanishing Mean} & \multicolumn{2}{c}{Known Covariance}\\
			& Size & \multicolumn{1}{c}{Bias}  &  \multicolumn{1}{c}{SE} & \multicolumn{1}{c}{CP (\%)} & \multicolumn{1}{c}{Bias} & \multicolumn{1}{c}{SE} & \multicolumn{1}{c}{Bias} & \multicolumn{1}{c}{SE}\\ 
			\hline
			
			$\theta=1$ & 100 & $-0.033$ & 0.258 & 94.4& $-0.110$ & 0.230 & $-0.051$ & 0.201\\
			& 200 & 0.006& 0.188& 95.6& $-0.052$ & 0.161 & $-0.001$ & 0.137\\
			& 400 & 0.010& 0.133 &95.1 & 0.002 & 0.111 & 0.001 & 0.099\\
			\hline
			$\theta=0.5$ &100 & 0.028 & 0.202 & 95.8 &  $-0.165$ & 0.190 & $-0.020$ & 0.142\\
			& 200 & 0.016 & 0.188& 95.1 & $-0.033$ & 0.134 & 0.007 & 0.095\\
			& 400 & 0.010 & 0.117& 95.5  & 0.015 & 0.092 & 0.004 & 0.067\\
			\hline
			$\theta=2$ & 100 & 0.010 & 0.299 & 93.9 & 0.027 & 0.553 & 0.027 & 0.366\\ 
			& 200  & 0.003& 0.203& 95.7 & 0.055 & 0.284 & 0.004 & 0.216\\ 
			& 400 & $-0.005$ & 0.169 & 94.5 & 0.012 & 0.198 & $-0.003$ & 0.105\\
			\hline
			$x=-0.9$ & 100 & $-0.029$ & 0.254 & 95.2 &  $-0.055$ & 0.250 & 0.031 & 0.222 \\
			$\theta=1$& 200 &0.009 & 0.181 & 94.1 & 0.181 & 0.171 & 0.115 & 0.138\\
			& 400 & 0.002 & 0.124 & 94.6 & 0.155 & 0.114 & 0.213 & 0.094\\
			\hline
			$x=-0.8$ &100 & 0.023 & 0.269 & 93.3 &  0.335 & 0.270 & 0.195 & 0.211\\
			$\theta=1$& 200 & $-0.006$ & 0.169 & 94.9 & 0.490 & 0.184 & 0.234 & 0.142\\
			& 400 & 0.003 & 0.108 & 94.4 & 0.304 & 0.127 & 0.278 & 0.099\\
			
			\hline\hline
		\end{tabular}
	\end{center}
\end{table}

Thus we find that the estimates obtained using our methods are quite comparable to the special case where the covariance structure is known, although Klaassen's method has lower variance. This is expected because Klaassen's method is under some additional model assumptions which are not used by our method.  
However, their estimates are very sensitive to model specification. On the other hand, our naive analysis yields unbiased estimates in both cases.  The variance estimators accurately reflect the actual variance, while the confidence intervals also have correct coverage probabilities.

\section{Data Analysis}\label{sec:dat}

For illustration, we analyze a subset of the data used by \citep{Keiding12}. It is a backward recurrence time data setting on the time to pregnancy obtained from a large French telephone survey. Women were eligible if they were between 18-44 years old, were living with a male partner and did not use any method to avoid pregnancy. We consider only nulliparous women who had not initiated  any fertility treatment. The response variable was the current duration of unprotected intercourse, which is the time elapsed from the start of unprotected intercourse and the interview.  
The estimates obtained for the covariates along with the 95 $\%$ confidence intervals are given in Table 2. We note that the naive estimator can accurately determine the effect of the covariates and is comparable with the ordinary least squares results. 

\begin{table}[ht]
	\begin{center}
		\caption{Estimates for time ratios and the corresponding confidence intervals for nulliparous women}
		\label{tab:two}
		\begin{tabular}{l ccc}
			\hline
			
			&& Semiparametric AFT & OLS\\
			
			Covariate & No & Time Ratio & Time Ratio\\
			\hline
			\multicolumn{4}{l}{Tobacco Consumption at recruitment}\\
			Non-Smokers & 159 & 1 & 1\\
			Smokers& 92 & 1.20(0.75,1.78) &  1.04(0.70,1.53)\\
			\multicolumn{4}{l}{Age at recruitment}\\
			0-17 &3 & 7.50(1.50,38.0) & 7.32(1.29,41.4)\\
			18-24 & 50 & 2.00(1.20,3.41) & 2.08(1.24,3.49)\\
			25-29 & 93 & 1 & 1\\
			30-34 & 62 & 1.00(0.61,1.74) & 1.01(0.63,1.64)\\
			35-39 & 41 & 1.10(0.61,2.02) & 0.93(0.54,1.62)\\
			40-44 & 2 & 0.13(0.01,1.17) & 0.13(0.02,1.10)\\
			\multicolumn{4}{l}{Frequency of Sexual Intercourse}\\
			$<$1 per month & 0\\
			1-3 per month & 44 & 2.20(1.20,3.89) &  2.18(1.27,3.71)\\
			1-2 per week & 109 & 1.20(0.78,1.92) & 1.23(0.81,1.86)\\
			$\geq$3 per week & 98 & 1 & 1\\
			\multicolumn{4}{l}{Menstrual Cycle Length}\\
			$<$27 days & 53 & 1& 1\\
			27-29 days & 110 &  0.90(0.52,1.55) & 0.86(0.52,1.41)\\
			$\geq$30 days & 88 & 1.10(0.62,1.81) & 0.99(0.59,1.63)\\
			\hline\hline\\
		\end{tabular}
	\end{center}
\end{table}
%\vspace{-0.5in}

\section{Discussion}\label{sec:disc}

The assumption that the initiating event follows a homogeneous Poisson process has been widely used \citep{Cox69,VanEs00,Keiding02}. Such an assumption may be reasonable over short time periods, where the rate of the initiating event may be stable. Over longer time periods, where the assumption may not be valid, alternative methods may be needed. This is a challenging problem worthy of further investigation.

In the presence of covariates, a popular alternative to the accelerated failure time model is the proportional hazards model \citep{Cox72} given by $\lambda_{T \mid Z}(t) = e^{\tZ}\lambda(t)$,
where $\lambda_{T \mid Z}$ is the hazard function of $T$ given the covariate vector $Z$ and $\lambda$ is an unspecified baseline hazard function. Here the density of $T$ is given by $e^{\tz}\lambda(t)e^{-e^{\tz}\Lambda(t)}$, where, $\Lambda$ is the cumulative baseline hazard function satisfying $\Lambda(0) = 0$.
If we assume a proportional hazards model for $T$, then by (\ref{eq1.1}), under length-biased and cross-sectional sampling, the conditional density of the forward or the backward recurrence time $\Tbf$, given $\tilde{Z}$, is
\begin{equation*}
g_{\Tbf\mid \tilde{Z}=z}(t) = \frac{e^{-e^{\tz}\Lambda(t)}}{\int e^{-e^{\tz}\Lambda(t)}}, \quad t \in (0,\infty).
\end{equation*}
and the conditional hazard function is
\begin{equation*}
\lambda_{\Tbf \mid \tilde{Z}=z}(t) = \frac{e^{-e^{\tz}\Lambda(t)}}{\int_{t}^{\infty}e^{-e^{\tz}\Lambda(u)}du}.
\end{equation*}
Thus, going from $T$ to $\Tbf$, the proportional hazard structure is lost, unless either the baseline hazard is constant or $T$ given $Z$ follows a Pareto distribution \citep{VanEs00}. The usual techniques for fitting the proportional hazards model may yield biased results. Furthermore, it is unclear whether information in the observed covariates may be employed to yield more efficient estimation, in contrast to our results for the accelerated lifetime model for T given Z. Analogous issues arise for the length-biased time $T_{\mathrm{LB}}$ under the proportional hazards model. Further investigation is needed in these areas. 

Under additional assumptions on the covariate distribution, for example, $Z$ is mean zero or $h$ is known or specified parametrically, information may be gained using the observed covariates \citep{VanEs00}. The trade-off for these efficiency gains is the potential bias associated with the additional modeling assumptions for the distribution of $Z$. In our simulations given above, we found that the efficient estimators are rather sensitive to the extra assumptions and that the gains in efficiency when such assumptions are correctly specified are rather modest compared to the naive efficient estimators. 

\bibliographystyle{imsart-nameyear}
\bibliography{bib2}

\begin{thebibliography}{17}
% BibTex style file: imsart-nameyear.bst, 2017-11-03
% Default style options (sort=1,type=nameyear).
% Used options (sort=1,type=nameyear).

\bibitem[\protect\citeauthoryear{Bickel et~al.}{1993}]{BKRW}
\begin{bbook}[author]
\bauthor{\bsnm{Bickel},~\bfnm{P.~J.}\binits{P.~J.}},
  \bauthor{\bsnm{Klaassen},~\bfnm{C.~A.~J.}\binits{C.~A.~J.}},
  \bauthor{\bsnm{Ritov},~\bfnm{Y.}\binits{Y.}} \AND
  \bauthor{\bsnm{Wellner},~\bfnm{J.~A.}\binits{J.~A.}}
(\byear{1993}).
\btitle{Efficient and Adaptive Estimation for Semiparametric Models}.
\bpublisher{Springer-Verlag}, \baddress{New York}.
\end{bbook}
\endbibitem

\bibitem[\protect\citeauthoryear{Brookmeyer and Gail}{1987}]{Brookmeyer87}
\begin{barticle}[author]
\bauthor{\bsnm{Brookmeyer},~\bfnm{R.}\binits{R.}} \AND
  \bauthor{\bsnm{Gail},~\bfnm{M.~H.}\binits{M.~H.}}
(\byear{1987}).
\btitle{Biases in prevalent cohorts}.
\bjournal{Biometrics}
\bvolume{43}
\bpages{739-749}.
\end{barticle}
\endbibitem

\bibitem[\protect\citeauthoryear{Chan}{2013}]{Chan13}
\begin{barticle}[author]
\bauthor{\bsnm{Chan},~\bfnm{K.~C.~G.}\binits{K.~C.~G.}}
(\byear{2013}).
\btitle{Survival analysis without survival data: connecting length-biased and
  case-control data}.
\bjournal{Biometrika}
\bvolume{100}
\bpages{764-770}.
\end{barticle}
\endbibitem

\bibitem[\protect\citeauthoryear{Chen}{2009}]{Chen09}
\begin{barticle}[author]
\bauthor{\bsnm{Chen},~\bfnm{Y.~Q.}\binits{Y.~Q.}}
(\byear{2009}).
\btitle{Semiparametric Regression in Size-Biased Sampling}.
\bjournal{Biometrics}
\bvolume{66}
\bpages{149-158}.
\end{barticle}
\endbibitem

\bibitem[\protect\citeauthoryear{Cook and Bergeron}{2011}]{Bergeron11}
\begin{barticle}[author]
\bauthor{\bsnm{Cook},~\bfnm{R.~J.}\binits{R.~J.}} \AND
  \bauthor{\bsnm{Bergeron},~\bfnm{P.~J.}\binits{P.~J.}}
(\byear{2011}).
\btitle{Information in the Sample Covariate Distribution in Prevalent Cohorts}.
\bjournal{Statist. Med.}
\bvolume{30}
\bpages{1397-1409}.
\end{barticle}
\endbibitem

\bibitem[\protect\citeauthoryear{Cox}{1969}]{Cox69}
\begin{bincollection}[author]
\bauthor{\bsnm{Cox},~\bfnm{D.~R.}\binits{D.~R.}}
(\byear{1969}).
\btitle{Some sampling problems in technology}.
In \bbooktitle{New Developments in Survey Sampling}
(\beditor{\bfnm{N.~I.}\binits{N.~I.}~\bsnm{Johnson}} \AND
  \beditor{\bfnm{H.}\binits{H.}~\bsnm{Smith}}, eds.)
\bpages{506-527}.
\bpublisher{Wiley}, \baddress{New York}.
\end{bincollection}
\endbibitem

\bibitem[\protect\citeauthoryear{Cox}{1972}]{Cox72}
\begin{barticle}[author]
\bauthor{\bsnm{Cox},~\bfnm{D.~R.}\binits{D.~R.}}
(\byear{1972}).
\btitle{Regression Models and Life Tables (with discussion)}.
\bjournal{J. R. Statist. Soc. B}
\bvolume{34}
\bpages{187-220}.
\end{barticle}
\endbibitem

\bibitem[\protect\citeauthoryear{Huang and Qin}{2011}]{Huang11}
\begin{barticle}[author]
\bauthor{\bsnm{Huang},~\bfnm{C-Y.}\binits{C.-Y.}} \AND
  \bauthor{\bsnm{Qin},~\bfnm{J.}\binits{J.}}
(\byear{2011}).
\btitle{Nonparametric estimation for length-biased and right-censored data}.
\bjournal{Biometrika}
\bvolume{98}
\bpages{177-186}.
\end{barticle}
\endbibitem

\bibitem[\protect\citeauthoryear{Keiding et~al.}{2002}]{Keiding02}
\begin{barticle}[author]
\bauthor{\bsnm{Keiding},~\bfnm{N.}\binits{N.}},
  \bauthor{\bsnm{Kvist},~\bfnm{K.}\binits{K.}},
  \bauthor{\bsnm{Hartvig},~\bfnm{H.}\binits{H.}},
  \bauthor{\bsnm{Tvede},~\bfnm{M.}\binits{M.}} \AND
  \bauthor{\bsnm{Juul},~\bfnm{S.}\binits{S.}}
(\byear{2002}).
\btitle{Estimating time to pregnancy from current durations in a
  cross-sectional sample}.
\bjournal{Biostatistics}
\bvolume{3}
\bpages{565-578}.
\end{barticle}
\endbibitem

\bibitem[\protect\citeauthoryear{Keiding et~al.}{2011}]{Keiding11}
\begin{barticle}[author]
\bauthor{\bsnm{Keiding},~\bfnm{N.}\binits{N.}},
  \bauthor{\bsnm{Fine},~\bfnm{J.~P.}\binits{J.~P.}},
  \bauthor{\bsnm{Hansen},~\bfnm{O.~H.}\binits{O.~H.}} \AND
  \bauthor{\bsnm{Slama},~\bfnm{R.}\binits{R.}}
(\byear{2011}).
\btitle{Accelerated failure time regression for backward recurrence times and
  current durations}.
\bjournal{Statist. Prob. Letters}
\bvolume{81}
\bpages{724-729}.
\end{barticle}
\endbibitem

\bibitem[\protect\citeauthoryear{Keiding et~al.}{2012}]{Keiding12}
\begin{barticle}[author]
\bauthor{\bsnm{Keiding},~\bfnm{N.}\binits{N.}},
  \bauthor{\bsnm{Hansen},~\bfnm{O.~H.}\binits{O.~H.}},
  \bauthor{\bsnm{Sorensen},~\bfnm{D.~N.}\binits{D.~N.}} \AND
  \bauthor{\bsnm{Slama},~\bfnm{R.}\binits{R.}}
(\byear{2012}).
\btitle{The Current Duration Approach to Estimating Time to Pregnancy}.
\bjournal{Scandinavian Journal ofStatistics}
\bvolume{39}
\bpages{185-204}.
\end{barticle}
\endbibitem

\bibitem[\protect\citeauthoryear{Kosorok}{2008}]{K08}
\begin{bbook}[author]
\bauthor{\bsnm{Kosorok},~\bfnm{M.~R.}\binits{M.~R.}}
(\byear{2008}).
\btitle{Introduction to Empirical Processes and Semiparametric Inference}.
\bpublisher{Springer}, \baddress{New York}.
\end{bbook}
\endbibitem

\bibitem[\protect\citeauthoryear{Mandel and Ritov}{2010}]{Mandel10}
\begin{barticle}[author]
\bauthor{\bsnm{Mandel},~\bfnm{M.}\binits{M.}} \AND
  \bauthor{\bsnm{Ritov},~\bfnm{Y.}\binits{Y.}}
(\byear{2010}).
\btitle{The Accelerated Failure Time Model Under Biased Sampling}.
\bjournal{Biometrics}
\bvolume{66}
\bpages{1306-1308}.
\end{barticle}
\endbibitem

\bibitem[\protect\citeauthoryear{van Es, Klaassen and
  Oudshoorn}{2000}]{VanEs00}
\begin{barticle}[author]
\bauthor{\bparticle{van} \bsnm{Es},~\bfnm{B.}\binits{B.}},
  \bauthor{\bsnm{Klaassen},~\bfnm{C.~A.~J.}\binits{C.~A.~J.}} \AND
  \bauthor{\bsnm{Oudshoorn},~\bfnm{K.}\binits{K.}}
(\byear{2000}).
\btitle{Survival analysis under cross section sampling: length bias and
  multiplicative censoring}.
\bjournal{J. Statist. Plan. Inference}
\bvolume{91}
\bpages{295-312}.
\end{barticle}
\endbibitem

\bibitem[\protect\citeauthoryear{Vardi}{1982}]{Vardi82}
\begin{barticle}[author]
\bauthor{\bsnm{Vardi},~\bfnm{Y.}\binits{Y.}}
(\byear{1982}).
\btitle{Nonparametric estimation in the presence of length bias}.
\bjournal{Ann. Statist.}
\bvolume{10}
\bpages{616-620}.
\end{barticle}
\endbibitem

\bibitem[\protect\citeauthoryear{Yamaguchi}{2003}]{Yamaguchi03}
\begin{barticle}[author]
\bauthor{\bsnm{Yamaguchi},~\bfnm{K.}\binits{K.}}
(\byear{2003}).
\btitle{Accelerated failure time mover stayer regression models for the
  analysis of last episode data}.
\bjournal{Sociol. Methodol.}
\bvolume{33}
\bpages{81-110}.
\end{barticle}
\endbibitem

\bibitem[\protect\citeauthoryear{Zeng and Lin}{2007}]{Zeng07}
\begin{barticle}[author]
\bauthor{\bsnm{Zeng},~\bfnm{D.}\binits{D.}} \AND
  \bauthor{\bsnm{Lin},~\bfnm{D.~Y.}\binits{D.~Y.}}
(\byear{2007}).
\btitle{Efficient Estimation for the Accelerated Failure Time Model}.
\bjournal{J. Am. Statist. Assoc.}
\bvolume{102}
\bpages{1387-1396}.
\end{barticle}
\endbibitem

\end{thebibliography}

\end{document}